\newtheorem{lemma}{Lemma}
\newtheorem{theorem}[lemma]{Theorem}
\newtheorem{proposition}[lemma]{Proposition}
\newtheorem{corollary}[lemma]{Corollary}
\newcommand{\RR}{\mathbf{R}}
\begin{document}

\title{Rectifiability and finite variation}
\author[Hendtlass]{Matthew Hendtlass}
 \address{School of Mathematics and Statistics,
    University of Canterbury,
    Christchurch 8041,
    New Zealand}
\email{matthew.hendtlass@canterbury.ac.nz}

\begin{abstract}
 We show that the length of a path in \( \mathbf{R}^2 \) can be computed if and only if its variation in every direction can.
\end{abstract}

\maketitle

\section{Introduction}

In this paper we explore the relationship between computing the variation of a function and computing its length; however, to get an exact correspondence we must work with paths. Our main result says, roughly, that we can compute the length of a path if and only if we can measure the length of every projection of the path.

To make our notion of `computation' precise, and relatively painless, we work in Bishop's-style constructive mathematics,\footnote{The name indicates its foremost practitioner, Errett Bishop, and not a dogmatic or puritanical nature (which would of course be Bishops or Bishops').} which is mathematics using intuitionistic logic and dependent choice,\footnote{In fact, we do not make use of any choice, so our proofs are very general and are valid, for example, in Aczel's Constructive ZF set theory or in the internal logic of any topos.} and also generally avoiding impredicativity; see \cite{BB, BV} for an introduction to Bishop's constructive mathematics. So hereafter when we make a statement like `the supremum exists' we mean that there is a constructive proof that we can calculate this supremum to arbitrary precision; this proof then embodies an algorithm for computing the supremum and guarantees that it meets its specifications. The extraction of (efficient) algorithms from constructive proofs is an open area of research (see \cite[Chapter 7]{SW} for an introduction).

Although our analysis grew out of an interest in functions and their graphs, it is necessary for us to work with more general objects. A \emph{path} in a metric space $(X, \rho)$ is a continuous function from $[0, 1]$ into $X$. If $f$ is a real-valued function on \( [0, 1] \), then \( \alpha: t \mapsto (t, f(t)) \) is a path in \( \RR^2 \) tracing the graph of \( f \). For a path $\alpha:[0, 1]\rightarrow X$ and a \emph{partition} $P: 0 = x_0 \leqslant x_1 \leqslant \cdots \leqslant x_n = 1$ of $[0, 1]$ we define
 $$
  l_P(\alpha) = \sum_{i = 0}^{n - 1} \rho((x_i, \alpha(x_i)), (x_{i + 1}, \alpha(x_{i + 1})));
 $$
$l_P(\alpha)$ is the length of the piecewise linear approximation to $\alpha$ given by sampling $\alpha$ at the points of $P$. The \emph{length} of a path $\alpha$ is then
 $$
  l(\alpha) = \sup_{P\in\mathcal{P}} l_P(\alpha),
 $$
where $\mathcal{P}$ is the set of all partitions of $[0, 1]$. If $l(\alpha)$ is a real number (that is, can be computed to arbitrary precision), then $\alpha$ is said to be \emph{rectifiable}. We will show that a path $\alpha$ in $\mathbf{R}^2$ is rectifiable whenever each projection of $\alpha$ onto a one dimensional subspace of $\mathbf{R}^2$ is rectifiable. Even if $l(\alpha)$ has not been shown to be a real number, we may write, for example, $l(\alpha) < M$ to mean that $l_P(\alpha) < M$ for all partitions $P$ of $[0, 1]$.

%
%

For a vector $w\in\mathbf{R}^2\setminus\{0\}$ the \emph{variation of} $\alpha:[0, 1]\rightarrow \mathbf{R}^2$ \emph{along} $w$ is 
 $$
  v_w(\alpha) = l(\pi_w\circ\alpha),
 $$
where $\pi_w$ is the standard projection of $\mathbf{R}^2$ onto $\mathbf{R}w$. If $w = (\cos(\theta), \sin(\theta))$, we write $v_\theta(\alpha)$ for $v_w(\alpha)$. We note that $v_{\theta_1} = v_{\theta_2}$ whenever $\theta_1 = \theta_2\ \mbox{mod } \pi$ . If $\alpha$ is the graph of a real-valued function on $[0, 1]$, then $v_{\pi / 2}(\alpha)$ is what is normally meant by the variation of $\alpha$. A path in \( \RR^2 \) is said to have \emph{finite variation in every direction} if $v_\theta$ exists for all $\theta \in \RR$.

We can now state our main result.

\bigskip
\begin{theorem}\label{main}
 A path $\alpha:[0, 1] \rightarrow \RR^2$ is rectifiable if and only if it has finite variation in every direction.
\end{theorem}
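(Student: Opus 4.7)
My plan is to base both directions on the Cauchy--Crofton identity
\[
  \|w\| = \tfrac{1}{2}\int_0^\pi|\pi_\theta(w)|\,d\theta \qquad (w\in\RR^2),
\]
which follows at once from $\int_0^\pi|\cos(\phi-\theta)|\,d\theta=2$. Summing over the edges of a partition $P$ gives the pointwise formula $l_P(\alpha)=\tfrac{1}{2}\int_0^\pi l_P(\pi_\theta\circ\alpha)\,d\theta$, and each direction reduces to transferring the Cauchy property (stability of $l_P$ under refinement) between the two sides of this formula.

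For $(\Leftarrow)$, I pick equally-spaced angles $\theta_k=k\pi/n$. For each $k$ the hypothesis supplies a partition $P_k$ with $l_Q(\pi_{\theta_k}\circ\alpha)-l_{P_k}(\pi_{\theta_k}\circ\alpha)<\delta$ for every $Q\supset P_k$, and taking $P_0=\bigcup_k P_k$ makes all of these bounds hold simultaneously. Since $\theta\mapsto l_P(\pi_\theta\circ\alpha)$ is Lipschitz in $\theta$ with constant $l_P(\alpha)\leq v_0(\alpha)+v_{\pi/2}(\alpha)=:M$, the grid control extends to all $\theta\in[0,\pi]$, yielding $l_Q(\pi_\theta\circ\alpha)-l_{P_0}(\pi_\theta\circ\alpha)\leq\delta+M\pi/n$ for every $Q\supset P_0$. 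Integrating and invoking Cauchy--Crofton gives $l_Q(\alpha)-l_{P_0}(\alpha)\leq\tfrac{\pi}{2}(\delta+M\pi/n)$, which can be made arbitrarily small by first taking $n$ large and then $\delta$ small; hence $\{l_P(\alpha)\}$ is Cauchy under refinement and $l(\alpha)$ exists.

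For $(\Rightarrow)$ the key---and the main obstacle---is the local quadratic inequality
\[
  \Bigl(\sum_{j}|p_j| - \Bigl|\sum_{j}p_j\Bigr|\Bigr)^{2} \leq 2\Bigl(\sum_{j}\|w_j\|\Bigr)\Bigl(\sum_{j}\|w_j\| - \|v\|\Bigr)
\]
for $w_1,\dots,w_n\in\RR^2$ with $v=\sum_j w_j$ and $p_j=\pi_\theta(w_j)$. A naive linear bound ``projection defect $\leq$ constant$\cdot$full-length defect'' fails badly---e.g.\ decomposing $v=(0,2N)$ as $(1,N)+(-1,N)$ yields full defect $O(1/N)$ but $\pi_0$-projection defect $2$---so the quadratic square-root bound is essential. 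To prove it I split indices by the sign of $p_j$ into $J_\pm$ and set $A=\sum_{J_+}p_j$, $B=-\sum_{J_-}p_j$, so that the left-hand side equals $4\min(A,B)^2$; bundling $u_\pm=\sum_{J_\pm}w_j$ and writing $Y_\pm=\pi_{\theta+\pi/2}(u_\pm)$, Cauchy--Schwarz applied to the pair $(A,Y_+),(B,Y_-)\in\RR^2$ gives $\|u_+\|\|u_-\|\geq AB+Y_+Y_-$, whence $(\sum\|w_j\|)^{2}\geq(\|u_+\|+\|u_-\|)^2\geq(A+B)^{2}+Y^{2}$ with $Y=Y_++Y_-$; subtracting $\|v\|^{2}=(A-B)^{2}+Y^{2}$ gives $(\sum\|w_j\|)^{2}-\|v\|^{2}\geq4\min(A,B)^{2}$, and dividing by $\sum\|w_j\|+\|v\|\leq2\sum\|w_j\|$ finishes the argument. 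Applying this edgewise to a refinement $Q\supset P_0$ and summing with Cauchy--Schwarz then produces
\[
  l_Q(\pi_\theta\circ\alpha)-l_{P_0}(\pi_\theta\circ\alpha)\leq\sqrt{2\,l(\alpha)\,(l_Q(\alpha)-l_{P_0}(\alpha))};
\]
rectifiability of $\alpha$ makes the right-hand side arbitrarily small, so $\{l_P(\pi_\theta\circ\alpha)\}$ is Cauchy and $v_\theta(\alpha)$ exists.
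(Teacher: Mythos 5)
Your proposal is correct, and the two directions deserve separate comment. For finite variation implies rectifiable, your argument is essentially the paper's: both control the variation defect uniformly in $\theta$ by combining partitions adapted to a finite grid of angles with the Lipschitz dependence of $v_{\theta,P}(\alpha)$ on $\theta$ (the paper uses constant $2l_P(\alpha)$, you use the sharper $l_P(\alpha)$, and both bound $l_P(\alpha)$ via two spread-out directions), and both then recover the length by integrating $|\cos|$ over $[0,\pi]$ --- your explicit appeal to Cauchy--Crofton is exactly the computation in the paper's final theorem. For rectifiable implies finite variation, you arrive at the same quantitative fact as the paper's Lemma~\ref{L14} (variation defect at most of order $\sqrt{l(\alpha)\cdot(\text{length defect})}$), but by a genuinely different route: the paper follows \cite{BHP}, building an auxiliary reflected piecewise-linear path $\beta$ whose displacement realises the variation, proving the inequality first for the trivial partition and then invoking extensions of Lemmas 13--15 of \cite{BHP} for the general case; you instead prove a self-contained edgewise inequality by splitting increments according to the sign of their projection and applying Cauchy--Schwarz twice (once to bundle the two sign classes, once to sum over the edges of $P$). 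Your version buys a sharper explicit constant, eliminates the dependence on the external lemmas, and your example $v=(1,N)+(-1,N)$ usefully explains why the square-root loss is unavoidable. Two constructive points that you gloss over but should make explicit, since the paper is careful about both: the sign of $\pi_\theta(w_j)$ is not decidable, so the split into $J_\pm$ must be justified by proving the (non-strict) inequality on a dense set of configurations and passing to the limit, exactly as the paper does via hypothesis (ii) in Lemma~\ref{L14} and the continuity argument in Proposition~\ref{P:UnCts}; and the union $\bigcup_k P_k$ of partitions need not be orderable constructively, so you must perturb the partition points, as the paper does before applying the least-upper-bound principle. Neither issue damages your argument, but both need a sentence.
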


To begin, it is instructive to see a `well-behaved' function that cannot be shown constructively to be rectifiable or have finite variation. These examples are similar to those in \cite{BHP}---where it is shown that a rectifiable function has finite variation and that the converse cannot be shown constructively---and further details can be found therein. 

Let \( (a_n)_{n \geqslant 1} \) be a binary sequence with at most one non-zero term. Define 
\[ 
 f_n = \frac{1}{2^n} \inf\left\{\left\vert 2^nx - k \right\vert : k \in \mathbf{N} \right\}
\]
and let \( f = \sum_{k = 1}^\infty a_kf_k \); since \( \Vert f_n \Vert_\infty = 2^{-n} \) for each \( n \), \( f \) is well defined. 
Note that if \( a_n = 1 \) for some \( n \), then \( f = f_n \) has length \( \sqrt{2} \) and variation \( 1 \); on the other hand, if \( a_n = 0 \) for all \( n \), then \( f = 0 \) has length \( 1 \) and variation \( 0 \). In either case \( 1 \) is a Lipshitz constant of \( f \).

\begin{center}
\begin{tikzpicture}

\node at (4, 3) {\( {f_n: [0,1] \to \mathbf{R}} \)};

\draw (1, 1) -- (12, 1);
\draw (1, 1) -- (1, 3);

\draw[thick] (1, 1) -- (2, 2) -- (3, 1) -- (4, 2) -- (5, 1);
\draw[thick] (10, 1) -- (11, 2) -- (12, 1);

\node at (7.5, 1.3){{\Huge \( \cdots \)}};

\node[anchor = east] at (1.1, 2) {\( 2^{-n} \) --};

\node[anchor = north] at (1, 1) {\( 0 \) };
\node[anchor = north] at (12, 1) {\( 1 \) };

\draw [
    decoration ={
        brace,
        mirror
    },
    decorate
] (1, 0.5) -- (12, 0.5);

\node at (6.5, 0.15) {\( 2^n \) peaks};

\end{tikzpicture}
\end{center}

Suppose that the variation \( v_{\pi / 2}(f) \) of \( f \) exists. Either \( v_{\pi / 2}(f) > 0 \) or \( v_{\pi / 2}(f) < 1 \). In the first case, let \( P \) be a partition of \( [0, 1] \) such that \( l =  l_P(\pi_w \circ f) > 0 \) where \( w = (1, 0) \). Then there exists \( x \in P \) such that \( f(x) > l / |P| > 0 \); fix \( N \) such that \( f(x) > 2^{-n} \). If \( a_n = 0 \) for all \( n < N \), then 
 \[
   f(x) = \sum_{k = n}^\infty f_k(x) \leqslant f_{n}(x) \leqslant 2^{-n},
 \]
which is absurd; whence by examining \( a_1, a_2, \ldots, a_{n - 1} \) we can find \( m \) such that \( a_m = 1 \). On the other hand, if \( v_{\pi / 2}(f) < 1 \) we must have that \( a_n = 0 \) for all \( n \): for if \( a_n = 1 \), then \( f = f_n \) has variation \( 1 \). Altogether we have shown that if \( f \) has finite variation, then \( \forall n: a_n = 0 \) or \( \exists n: a_n = 1 \). Since in general there is no constructive algorithm to make such a decision about  \( (a_n)_{n \geqslant 1} \), there can be no algorithm to compute the variation of \( f \). In the parlance of reverse constructive mathematics \cite{HI}, `every \( f:[0, 1] \to \mathbf{R} \) has finite variation' implies the nonconstructive \emph{limited principle of omniscience}
 \begin{quote}
  LPO: For any binary sequence  \( (a_n)_{n \geqslant 1} \) either \( \forall n: a_n = 0 \) or \( \exists n: a_n = 1 \).
 \end{quote}
A similar proof shows that if  \( f \) is rectifiable, then \( \forall n: a_n = 0 \) or \( \exists n: a_n = 1 \). Finally, the function \( g:[0, 1] \rightarrow \RR \) given by \( g(x) = f(x) + x \) has variation \( 1 \), since it is non-decreasing, but is rectifiable if and only if \( \forall n: a_n = 0 \) or \( \exists n: a_n = 1 \).


The next section sketches the extension of the proof of Theorem 16 of \cite{BHP} to show that a rectifiable path has finite variation in every direction and Section \ref{fv_rect} establishes the converse.

\section{Rectifiable implies finite variation}\label{S:rect_fv}

The proof is very similar to that of \cite[Theorem 16]{BHP} and we provide only a sketch. The next result is analogous to  Lemma 15 of \cite{BHP} and makes use of natural extensions of Lemmas 13 and 14 of the same paper.

%
%


\bigskip
\begin{lemma}\label{L14}
Let \( \alpha \) be a uniformly continuous path in \( \RR^2 \) that is rectifiable. If \( P, P^\prime \) are partitions of \( [0, 1] \) with \( P \subset P^\prime \) and \( v_{\theta, P^\prime}(\alpha) - v_{\theta, P}(\alpha) > \varepsilon \) for some \( \theta \), then \( l_{P^\prime}(\alpha) - l_P(\alpha) > \sqrt{l(\alpha)^2 + \varepsilon^2} - l(\alpha) \).
\end{lemma}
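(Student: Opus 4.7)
The plan is a segment-wise Pythagorean decomposition followed by two applications of Minkowski's inequality. Working in coordinates adapted to \( \theta \), I would write each increment of \( \alpha \) over a subinterval of the refined partition \( P' \) as \( (a_j, b_j) \), with \( a_j \) the \( \theta \)-component and \( b_j \) the perpendicular component. Such a segment contributes \( \sqrt{a_j^2 + b_j^2} \) to \( l_{P'}(\alpha) \) and \( |a_j| \) to \( v_{\theta, P'}(\alpha) \).

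Fix a coarse interval \( [x_i, x_{i+1}] \) of \( P \), and let \( v_i, V_i, d_i, L_i \) denote its contributions to \( v_{\theta, P}(\alpha), v_{\theta, P'}(\alpha), l_P(\alpha), l_{P'}(\alpha) \) respectively; let \( \beta_i \) be the net perpendicular displacement across this interval. The key geometric fact, which I expect to be the main point to verify, is that Minkowski's inequality applied to the vectors \( (|a_j|, b_j) \) indexed over the refined subintervals inside \( [x_i, x_{i+1}] \) gives \( L_i \geq \sqrt{V_i^2 + \beta_i^2} \). Setting \( e_i := V_i - v_i \geq 0 \) and combining \( (v_i + e_i)^2 \geq v_i^2 + e_i^2 \) with the identity \( d_i^2 = v_i^2 + \beta_i^2 \), this strengthens to \( L_i \geq \sqrt{d_i^2 + e_i^2} \).

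Summing over \( i \) and applying Minkowski a second time, now to the vectors \( (d_i, e_i) \), yields
\[
 l_{P'}(\alpha) \geq \sum_i \sqrt{d_i^2 + e_i^2} \geq \sqrt{l_P(\alpha)^2 + \bigl(v_{\theta, P'}(\alpha) - v_{\theta, P}(\alpha)\bigr)^2} > \sqrt{l_P(\alpha)^2 + \varepsilon^2}.
\]
Since \( x \mapsto \sqrt{x^2 + \varepsilon^2} - x \) is strictly decreasing on \( [0, \infty) \) and \( l_P(\alpha) \leq l(\alpha) \), this rearranges into the stated inequality. Rectifiability enters only to ensure that \( l(\alpha) \), and hence \( \sqrt{l(\alpha)^2 + \varepsilon^2} \), is a bona fide real number; the two Minkowski applications are standard and manifestly constructive, so the decomposition above is the only place where any care is needed.
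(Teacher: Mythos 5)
Your proof is correct, but it takes a genuinely different and in some ways cleaner route than the paper's. The paper first reduces to a special case in which \( P \) is the trivial partition and the projected points of \( P' \) are pairwise distinct in both the \( \theta \) and \( \theta + \pi/2 \) directions; that discreteness hypothesis is needed so that the signs of the increments can be decided constructively, which lets the author build an auxiliary ``unfolded'' piecewise linear path \( \beta \) with \( l_{P'}(\beta) = l_{P'}(\alpha) \) whose endpoint displacement realises \( v_{\theta, P'}(\alpha) \); the endpoint-to-path-length inequality then does the work of your first Minkowski step, an inequality imported from \cite{BHP} (their Lemma 13, essentially \( \sqrt{d^2 + (v+\varepsilon)^2} - \sqrt{d^2 + v^2} \geqslant \sqrt{d^2 + \varepsilon^2} - d \)) does the work of your \( (v_i + e_i)^2 \geqslant v_i^2 + e_i^2 \) manipulation, and the general case is only sketched via an approximation argument extending Lemmas 14 and 15 of \cite{BHP}. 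You avoid all of this: by applying Minkowski's inequality to the vectors \( (|a_j|, b_j) \) you never need to decide the sign of any increment (taking \( |a_j| \) is constructively unproblematic even when deciding the sign of \( a_j \) is not), so no discreteness hypothesis, no perturbation, and no limiting argument are required, and the interval-by-interval decomposition followed by a second Minkowski application over the coarse intervals handles arbitrary \( P \subset P' \) directly. The only point worth spelling out in a final write-up is the chain \( L_i \geqslant \sqrt{V_i^2 + \beta_i^2} = \sqrt{(v_i + e_i)^2 + \beta_i^2} \geqslant \sqrt{v_i^2 + e_i^2 + \beta_i^2} = \sqrt{d_i^2 + e_i^2} \), which you have correctly identified as the crux; everything else is as routine as you claim. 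What the paper's approach buys is a direct reuse of the machinery of \cite{BHP}; what yours buys is a self-contained, fully explicit, and uniformly constructive proof of the general case.
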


\begin{proof}
Fix \( \theta \). We first consider the special case where
 \begin{itemize}
  \item[(i)] \( P \) is the trivial partition \( 0, 1 \),
  \item[(ii)] \( P^\prime \) is a strict partition \( 0 = s_0 < s_1 < \cdots < s_n = 1\) and \( \{\pi_{\theta} \circ \alpha(s_i): i \in n\} \) and \( \{\pi_{\theta + \pi / 2} \circ \alpha(s_i): i \in n\} \) are discrete.
 \end{itemize}

By (ii), the functions \( s, t: n \rightarrow \{0, 1\} \) given by
 \[
 s(i) = \begin{cases}
          -1 & \Vert \pi_\theta(\alpha(s_{i + 1}) - \alpha(s_i)) \Vert < 0\\
          \ \ 1 & \Vert \pi_\theta(\alpha(s_{i + 1}) - \alpha(s_i)) \Vert > 0
         \end{cases} 
 \]
and 
 \[
 t(i) = \begin{cases}
          -1 & \Vert \pi_{\theta + \pi / 2}(\alpha(s_{i + 1}) - \alpha(s_i)) \Vert < 0\\
          \ \ 1 & \Vert \pi_{\theta + \pi / 2}(\alpha(s_{i + 1}) - \alpha(s_i)) \Vert > 0
        \end{cases}
\]
are well defined. Let \( \beta \) be the piecewise linear path in \( \RR^2 \) defined by \( \beta(0) = 0 \) and, for \( i = 1, \ldots, n \),
\[
\beta(s_i) = \sum_{i \in i} (s(j)\pi_\theta(\alpha(s_{i + 1}) - \alpha(s_i)), t(j) \pi_{\theta + \pi / 2}(\alpha(s_{i + 1}) - \alpha(s_i))).
\]
By construction \( l_{P^\prime}(\beta) = l_{P^\prime}(\alpha) \) and \( v_{\theta, P^\prime}(\beta) = v_{\theta, P^\prime}(\alpha) = \Vert \pi_{\theta + \pi / 2}(\beta(1)) \Vert \). With 
 \[ 
  d = \Vert \pi_\theta \circ \beta(1) - \pi_\theta \circ \beta(0) \Vert 
 \] 
we have
 \begin{eqnarray*}
  l_{P^\prime}(\alpha) & = & l_{P^\prime}(\beta) \\
    & \geqslant & \sqrt{d^2 + \Vert \pi_{\theta + \pi / 2}(\beta(1)) \Vert^2} \\
    & = & \sqrt{d^2 + (v_{\theta, P^\prime}(\beta))^2} \\
    & > & \sqrt{d^2 + (v_{\theta, P}(\beta) + \varepsilon)^2}.
 \end{eqnarray*}
Thus
 \begin{eqnarray*}
  l_{P^\prime}(\alpha) - l_P(\alpha) & \geqslant & \sqrt{d^2 + (v_{\theta, P}(\beta) + \varepsilon)^2} - \sqrt{d^2 + (v_{\theta, P}(\beta))^2}\\
   & \geqslant & \sqrt{d^2 + \varepsilon^2} - d \\
   & \geqslant & \sqrt{l(\alpha)^2 + \varepsilon^2} - l(\alpha),
 \end{eqnarray*}
where the second to last inequality uses a straightforward extension of Lemma 13 of \cite{BHP} and the last inequality holds since \( d \leqslant l(\alpha) \).

The general case can then be proved in a way analogous to the proof of Lemma 15 of \cite{BHP}, making use of a suitable extension of Lemma 14 therein.
\end{proof}

\bigskip
\begin{theorem}
 If a path \( \alpha:[0, 1] \rightarrow \RR^2 \) is rectifiable, then it has finite variation in every direction.
\end{theorem}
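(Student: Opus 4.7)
The plan is to bootstrap from Lemma \ref{L14} and the fact that $l(\alpha)$ is a (computable) real number, producing $v_\theta(\alpha)$ as the limit of a Cauchy sequence of partition variations. First I note that $\alpha$ is uniformly continuous (being continuous on the compact interval $[0,1]$) and that the triangle inequality gives $v_{\theta,P}(\alpha) \leqslant l_P(\alpha) \leqslant l(\alpha)$ for every partition $P$, so $l(\alpha)$ serves as a uniform upper bound on the variations.

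Fix $\theta \in \RR$ and $\varepsilon > 0$. Set $\delta = \sqrt{l(\alpha)^2 + \varepsilon^2} - l(\alpha)$, which is positive whenever $\varepsilon > 0$ (so no case split on the sign of $l(\alpha)$ is needed). Since $l(\alpha) = \sup_P l_P(\alpha)$ is a real number, I can exhibit a partition $P_\varepsilon$ with $l_{P_\varepsilon}(\alpha) > l(\alpha) - \delta$. I claim that $v_{\theta,P_\varepsilon}(\alpha)$ is within $\varepsilon$ of the desired supremum: given any partition $Q$, let $P' = P_\varepsilon \cup Q$. Variation is monotone under refinement (again by the triangle inequality for the projection), so $v_{\theta,Q}(\alpha) \leqslant v_{\theta,P'}(\alpha)$. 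If we had $v_{\theta,P'}(\alpha) - v_{\theta,P_\varepsilon}(\alpha) > \varepsilon$, Lemma \ref{L14} would give $l_{P'}(\alpha) > l_{P_\varepsilon}(\alpha) + \delta > l(\alpha)$, contradicting $l_{P'}(\alpha) \leqslant l(\alpha)$. Hence $v_{\theta,Q}(\alpha) \leqslant v_{\theta,P_\varepsilon}(\alpha) + \varepsilon$.

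To assemble $v_\theta(\alpha)$ as a real number, I take $\varepsilon_n = 1/n$, pick partitions $P_n := P_{\varepsilon_n}$ as above, and set $a_n = v_{\theta,P_n}(\alpha)$. For $m,n \geqslant N$, applying the bound above with the common refinement $P_m \cup P_n$ in place of $Q$ gives $a_n \leqslant v_{\theta,P_m \cup P_n}(\alpha) \leqslant a_m + 1/m$ and symmetrically, so $(a_n)$ is Cauchy. Its limit is an upper bound for every $v_{\theta,Q}(\alpha)$ and is approached arbitrarily closely by the $a_n$, so it is the supremum $v_\theta(\alpha)$.

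I don't anticipate a real obstacle: Lemma \ref{L14} already contains the geometric content, and what remains is the familiar constructive trick of converting an increment bound into existence of a supremum by pinning the relevant refinements against an existing computable quantity (here $l(\alpha)$). The only points requiring care are the monotonicity of $v_{\theta,\cdot}$ under refinement (routine from the triangle inequality) and ensuring $\delta > 0$ without knowing the sign of $l(\alpha)$, which is automatic from the form of $\delta$.
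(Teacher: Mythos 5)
Your overall strategy is essentially the paper's: both arguments play Lemma \ref{L14} off against the computability of $l(\alpha)$, choosing a partition whose length is within $\delta = \sqrt{l(\alpha)^2+\varepsilon^2}-l(\alpha)$ of $l(\alpha)$ so that no refinement can increase the $\theta$-variation by more than $\varepsilon$. The paper then concludes via the constructive least-upper-bound principle (testing, for $a < b$, whether some $v_{\theta,P}(\alpha) > a$ or every $v_{\theta,Q}(\alpha) < b$), whereas you build the supremum directly as the limit of the Cauchy sequence $v_{\theta,P_n}(\alpha)$; these are interchangeable and your packaging is fine, as is the inference from $\neg(v_{\theta,P'}(\alpha)-v_{\theta,P_\varepsilon}(\alpha) > \varepsilon)$ to the corresponding $\leqslant$.

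There is, however, one genuine constructive gap: the step ``let $P' = P_\varepsilon \cup Q$'' (and likewise the common refinement $P_m \cup P_n$). A partition is by definition a list $0 = x_0 \leqslant x_1 \leqslant \cdots \leqslant x_n = 1$, and constructively one cannot in general sort the union of two finite sets of reals, since for arbitrary $x, x'$ the disjunction $x \leqslant x' \vee x' \leqslant x$ is not provable. This is exactly the point the paper singles out as the reason the proof of Theorem 16 of \cite{BHP} cannot simply be cited. The repair is to use the continuity of $\alpha$ to perturb the points of $P_\varepsilon$ so that each of them is apart from every point of $Q$ while preserving $l_{P_\varepsilon}(\alpha) > l(\alpha) - \delta$; once all pairs of points are apart, the union can be ordered and is a genuine partition. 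With that perturbation inserted your proof goes through; without it, the key step is not constructively justified.
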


The proof is essentially the same as that of \cite[Theorem 16]{BHP}. However, in that proof it is assumed that \( P \cup Q \) is a partition for partitions \( P, Q \); this cannot be established constructively, since partitions are given in non-decreasing order and we may not be able to order \( P \cup Q \). So we give the full proof here.

\begin{proof}
For \( \theta \in \mathbf{R} \) let
\[
S = \{v_{\theta, P}(\alpha): \text{$P$ is a partition of $[0,1]$} \}.
\]
Given real numbers $a, b$ with $a < b$, let $\varepsilon = \frac{a+b}{2}$ and let $P$ be a partition of $[0,1]$ such that
\begin{equation}
l(\alpha) - l_{P}(\alpha) < \sqrt{1+\varepsilon^{2}} - 1.  \label{F:P}
\end{equation}
Either $v_{\theta, P}(\alpha) > a$ or $v_{\theta, P}(\alpha) < (b - a)/2$. In the latter case,
consider any partition $P^{\prime}$ of $[0, 1]$. Since \( \alpha \) is continuous, we can perturb the points of \( P \) slightly, if need be, to ensure that \( x \neq x^\prime \) for all \( x \in P, x^\prime \in P^\prime \) while preserving (\ref{F:P}). Then we can form a partition with points \( P \cup P^\prime \).

By Lemma \ref{L14},
\[
v_{\theta, P\cup P^{\prime}}(\alpha) \leqslant v_{\theta, P}(\alpha) + \varepsilon < b.
\]
It follows from the constructive least-upper-bound principle (Theorem 3.2.4 of \cite{BB}) that \( v_{\theta}(\alpha) \) exists; that is, that \( \alpha \) has finite variation along \( (\cos(\theta), \sin(\theta)) \).
\end{proof}

\section{Finite variation implies rectifiable}\label{fv_rect}


For the work of this section we are in need of a little more notation. For a path $\alpha$, a partition $P: 0 = x_0 \leqslant x_1 \leqslant \cdots \leqslant x_n = 1$ of $[0, 1]$, and $i < n$ we write $l^\alpha_{P, i}$ and $\theta^\alpha_{P, i}$ for the length and angle, respectively, of the vector $(x_{i +1}, \alpha(x_{i + 1})) - (x_{i}, \alpha(x_{i}))$. So
 $$
  l_P(\alpha) = \sum_{i = 1}^{n -1} l^\alpha_{P, i}
 $$ 
and
 $$
  v_{\theta, P}(\alpha) = \sum_{i = 1}^{n -1} |\cos(\theta - \theta^\alpha_{P, i})| l^\alpha_{P, i}.
 $$

The proof of the second direction of Theorem \ref{main} has three steps:
 \begin{enumerate}
  \item show that \( \theta \mapsto v_\theta(\alpha) \) is uniformly continuous,
	\item build a partition \( P \) such that \( v_\theta(\alpha) - v_P(\alpha) \) is sufficiently small for all \( \theta \);
	\item derive a bound \( f(x) \) for \( l(\alpha) - l_P(\alpha) \) from an upper bound \( x \) of \( \{v_\theta(\alpha) - v_P(\alpha): \theta \in \RR \} \) such that \( f(x) \to 0 \) as \( x \to 0^+ \).
 \end{enumerate}
Our first lemma works toward establishing the first step.

\bigskip
\begin{proposition}\label{P:UnCts}
 For every path $\alpha$ in $\mathbf{R}^2$, every partition $P$ of $[0, 1]$, and all $\theta_1, \theta_2$,
  $$
   |v_{\theta_1, P}(\alpha) - v_{\theta_2, P}(\alpha)| \leqslant 2 l_P(\alpha) |\theta_1 - \theta_2|.
  $$
\end{proposition}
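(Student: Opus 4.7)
The plan is a straightforward termwise Lipschitz estimate. I would start from the formula
 \[
   v_{\theta, P}(\alpha) = \sum_{i < n} |\cos(\theta - \theta^\alpha_{P, i})|\, l^\alpha_{P, i}
 \]
recalled at the start of the section, subtract the two expressions for $v_{\theta_1, P}(\alpha)$ and $v_{\theta_2, P}(\alpha)$, and use the triangle inequality to bring the absolute value inside the sum. This reduces the whole question to producing a Lipschitz bound for the single-variable function $t \mapsto |\cos(t)|$, applied at $t = \theta_j - \theta^\alpha_{P, i}$ for each $i$.

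For that pointwise bound I would invoke $|\cos a - \cos b| \leqslant |a - b|$, which is constructively available from the product-to-sum identity $\cos a - \cos b = -2\sin\!\left(\tfrac{a+b}{2}\right)\sin\!\left(\tfrac{a-b}{2}\right)$ together with $|\sin x| \leqslant |x|$, and then apply the reverse triangle inequality $\bigl| |x|-|y| \bigr| \leqslant |x - y|$ to get
 \[
  \bigl| |\cos(\theta_1 - \theta^\alpha_{P, i})| - |\cos(\theta_2 - \theta^\alpha_{P, i})| \bigr| \leqslant |\theta_1 - \theta_2|
 \]
for each $i$. Neither step requires a case distinction on the signs of the cosines, so the estimate is completely constructive.

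Multiplying through by $l^\alpha_{P, i}$ and summing yields
 \[
  |v_{\theta_1, P}(\alpha) - v_{\theta_2, P}(\alpha)| \leqslant |\theta_1 - \theta_2| \sum_{i < n} l^\alpha_{P, i} = l_P(\alpha)\,|\theta_1 - \theta_2|,
 \]
which is actually sharper than the stated bound by a factor of $2$; the extra slack in the statement presumably anticipates a cruder mean-value-type estimate, or simply leaves room to spare for the applications in the rest of the section. I do not see a real obstacle here: the argument is just a routine Lipschitz-on-each-summand computation, and its constructive content is immediate, since the summands $|\cos(\theta - \theta^\alpha_{P, i})|\, l^\alpha_{P, i}$ are all computable from $\theta$, $P$, and $\alpha$ and only finitely many of them contribute.
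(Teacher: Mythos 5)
Your argument is correct, and it takes a genuinely different (and cleaner) route than the paper's. The paper also reduces to a termwise estimate, but it deals with the outer absolute values by first assuming that \(\theta_1\), \(\theta_2\) and each \(\theta^\alpha_{P,i}\) are rational, so that it can arrange \(|\theta_j - \theta^\alpha_{P,i}| \leqslant \pi/2\) and drop the absolute values around the cosines outright; it then expands \(\cos(\theta_j - \theta^\alpha_{P,i})\) by the angle-addition formula and bounds the difference by \(|\cos\theta_1 - \cos\theta_2| + |\sin\theta_1 - \sin\theta_2| \leqslant 2|\theta_1 - \theta_2|\), finally invoking continuity to remove the rationality assumption. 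You replace all of this with the reverse triangle inequality \(\bigl||x| - |y|\bigr| \leqslant |x - y|\), which is constructively unproblematic and needs no sign information about the cosines, followed by the \(1\)-Lipschitz bound \(|\cos a - \cos b| \leqslant |a - b|\) obtained from the product-to-sum identity and \(|\sin x| \leqslant |x|\). This avoids both the rational-approximation detour and the reduction to angles within \(\pi/2\) of each \(\theta^\alpha_{P,i}\), and it yields the sharper constant \(l_P(\alpha)|\theta_1 - \theta_2|\) in place of \(2\,l_P(\alpha)|\theta_1 - \theta_2|\). Since the proposition is only used to establish uniform continuity of \(\theta \mapsto v_\theta(\alpha)\) in Lemma \ref{L:cty}, the improved constant changes nothing downstream, but your argument is shorter, fully constructive, and strictly stronger than what is needed.
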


\begin{proof}
First we suppose that $\theta_1, \theta_2$ and each $\theta^\alpha_{P, i}$ is rational; so without loss of generality we may assume that $\vert \theta_1 -\theta^\alpha_{P, i} \vert \leqslant \pi / 2$ and $\vert\theta_2 -\theta^\alpha_{P, i}\vert \leqslant \pi / 2$ for each $i < n$. Then for each $i < n$ we have
 \begin{eqnarray*}
   ||\cos(\theta_1 - \theta^\alpha_{P, i})| - |\cos(\theta_2 - \theta^\alpha_{P, i})|| 
    & = & |\cos(\theta_1 - \theta^\alpha_{P, i}) - \cos(\theta_2 - \theta^\alpha_{P, i})| \\
    & = & |\cos(\theta_1)\cos(\theta^\alpha_{P, i}) + \sin(\theta_1)\sin(\theta^\alpha_{P, i}) - \\ & & \ \ \ \ \ \ \ \ \ \ \ (\cos(\theta_2)\cos(\theta^\alpha_{P, i}) + \sin(\theta_2)\sin(\theta^\alpha_{P, i}) )| \\
    & = & |(\cos(\theta_1) - \cos(\theta_2))\cos(\theta^\alpha_{P, i}) + \\ & & \ \ \ \ \ \ \ \ \ \ \ (\sin(\theta_1) - \sin(\theta_2))\sin(\theta^\alpha_{P, i})| \\
    & \leqslant & |(\cos(\theta_1) - \cos(\theta_2)| + |\sin(\theta_1) - \sin(\theta_2)| \\
    & \leqslant & 2|\theta_1 - \theta_2|.
 \end{eqnarray*}
Since $\sin, \cos$ and $|\cdot|$ are all continuous, we can drop the assumption that $\theta_1, \theta_2$ and $\theta^\alpha_{P, i}$ ($i < n$) are  rational. Thus
 \begin{eqnarray*}
  |v_{\theta_1, P}(\alpha) - v_{\theta_2, P}(\alpha)| 
    & = & \left\vert \sum_{i = 1}^{n - 1}l^\alpha_{P, i}|\cos(\theta_1 - \theta^\alpha_{P, i})| - \sum_{i = 1}^{n - 1}l^\alpha_{P, i}|\cos(\theta_2 - \theta^\alpha_{P, i})| \right\vert \\
    & = & \left\vert \sum_{i = 1}^{n - 1} l^\alpha_{P, i}(|\cos(\theta_1 - \theta^\alpha_{P, i})| - |\cos(\theta_2 - \theta^\alpha_{P, i})|) \right\vert \\
    & \leqslant & \sum_{i = 1}^{n - 1} 2 l^\alpha_{P, i} |\theta_1 - \theta_2| \\
    & = & 2 l_{P}(\alpha) |\theta_1 - \theta_2|.
 \end{eqnarray*}
\end{proof}

To finish the proof of Step 1 we must show that \( l(\alpha) \) is bounded above.

\bigskip
\begin{lemma}
For all $\gamma \in (0, \pi)$ there exists $r \in \mathbf{R}$ such that $l(\alpha) \leqslant r(v_\theta(\alpha) + v_{\theta + \gamma}(\alpha))$ for all $\theta$.
\end{lemma}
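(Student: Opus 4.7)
The plan is to find a pointwise lower bound, uniform in \( \theta \) and in the segment angle \( \theta_0 \), for \( |\cos(\theta - \theta_0)| + |\cos(\theta + \gamma - \theta_0)| \), and then apply it segment by segment to an arbitrary partition. Because each summand of \( v_{\theta, P}(\alpha) \) and \( v_{\theta + \gamma, P}(\alpha) \) comes from the same segment of length \( l^\alpha_{P, i} \), such a bound will give \( v_{\theta, P}(\alpha) + v_{\theta + \gamma, P}(\alpha) \geqslant c\, l_P(\alpha) \) for all \( P \) simultaneously, with \( c = c(\gamma) > 0 \). Combining this with \( v_{\theta, P}(\alpha) \leqslant v_\theta(\alpha) \) (and the analogous bound for \( \theta + \gamma \)) then yields \( l_P(\alpha) \leqslant r(v_\theta(\alpha) + v_{\theta + \gamma}(\alpha)) \) for \( r = 1/c \) and every partition \( P \), which is exactly the lemma under the convention introduced after the definition of \( l(\alpha) \).

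The heart of the argument is the trigonometric inequality
\[ |\cos \phi| + |\cos(\phi + \gamma)| \geqslant \sin \gamma \qquad (\phi \in \RR,\ \gamma \in (0, \pi)). \]
To see it, I would expand
\( \sin \gamma = \sin((\phi + \gamma) - \phi) = \sin(\phi + \gamma)\cos \phi - \cos(\phi + \gamma)\sin \phi, \)
apply the triangle inequality, and use \( |\sin x| \leqslant 1 \). Since \( \gamma \in (0, \pi) \) gives \( \sin \gamma > 0 \) strictly, \( r = 1/\sin \gamma \) is a well-defined positive real and no case analysis (for example, on whether \( \gamma \leqslant \pi/2 \) or \( \gamma \geqslant \pi/2 \)) is needed. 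This is pleasant constructively, since it avoids having to minimise a piecewise trigonometric function by searching for its critical points.

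Setting \( \phi = \theta - \theta^\alpha_{P, i} \), multiplying by \( l^\alpha_{P, i} \), and summing over \( i \) gives, via the formula for \( v_{\theta, P}(\alpha) \) recalled at the start of this section, \( v_{\theta, P}(\alpha) + v_{\theta + \gamma, P}(\alpha) \geqslant \sin \gamma \cdot l_P(\alpha) \), and the lemma follows as in the first paragraph. The only real obstacle is spotting a clean way to pull \( \sin \gamma \) out of the pair \( (|\cos \phi|, |\cos(\phi + \gamma)|) \); I do not anticipate trouble with the sum, with the supremum, or with constructivity, since the segmentwise bound is uniform in \( \phi \) and the reciprocal of \( \sin \gamma \) is manifestly computable from \( \gamma \).
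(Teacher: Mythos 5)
Your proof is correct, and it follows the same overall architecture as the paper's: a segmentwise lower bound on \( |\cos(\theta - \theta^\alpha_{P,i})| + |\cos(\theta + \gamma - \theta^\alpha_{P,i})| \), uniform in both angles, summed against the \( l^\alpha_{P,i} \) and then combined with \( v_{\theta, P}(\alpha) \leqslant v_\theta(\alpha) \). The difference is in the key trigonometric step. The paper takes \( r = 1/\min\{|\cos\theta + \cos(\theta + \gamma)| : \theta \in [0, \pi]\} \) and reaches the segmentwise bound via \( |a| + |b| \geqslant |a + b| \); but since \( \cos\theta + \cos(\theta + \gamma) = 2\cos(\gamma/2)\cos(\theta + \gamma/2) \) vanishes at \( \theta = \pi/2 - \gamma/2 \in [0, \pi] \), that minimum is \( 0 \) and the constant as written is undefined — the intended quantity is surely \( \min\{|\cos\theta| + |\cos(\theta + \gamma)|\} \), after which the intermediate triangle-inequality step should be dropped. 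Your route sidesteps this entirely: the identity \( \sin\gamma = \sin(\phi + \gamma)\cos\phi - \cos(\phi + \gamma)\sin\phi \) gives the explicit, manifestly positive lower bound \( \sin\gamma \) (which is in fact attained at \( \phi = \pi/2 \), so it is sharp), yielding the concrete constant \( r = 1/\sin\gamma \) with no minimisation of a piecewise function — a genuine advantage constructively, and it repairs the defect in the paper's formula. One small point of care: the conclusion for the possibly-nonexistent supremum \( l(\alpha) \) should be read via the paper's convention that \( l(\alpha) \leqslant M \) abbreviates \( l_P(\alpha) \leqslant M \) for all \( P \), which you correctly invoke.
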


\begin{proof}
Let $r = 1 / \min\{|\cos(\theta) + \cos(\theta + \gamma)|: \theta \in [0, \pi]\}$. It suffices to show that $l_P(\alpha) \leqslant r (v_{\theta, P}(\alpha) + v_{\theta + \gamma, P}(\alpha))$ holds for all partitions $P$ of $[0, 1]$. For a partition $P: 0 = x_0 \leqslant x_1 \leqslant \cdots \leqslant x_n = 1$ we have
 \begin{eqnarray*}
  r (v_{\theta, P}(\alpha) + v_{\theta + \gamma, P}(\alpha)) & = & r\sum_{i = 1}^{n - 1}\left(|\cos(\theta - \theta^\alpha_{P, i}) l^\alpha_{P, i}| +  |\cos(\theta + \gamma - \theta^\alpha_{P, i}) l^\alpha_{P, i}|\right) \\
  & \geqslant &  r\sum_{i = 1}^{n - 1}|\cos(\theta - \theta^\alpha_{P, i}) + \cos(\theta + \gamma - \theta^\alpha_{P, i})|  l^\alpha_{P, i}\\
  & \geqslant & \sum_{i = 1}^{n - 1} l^\alpha_{P, i} \ \ = \ \ l_P(\alpha).
 \end{eqnarray*}
\end{proof}

\bigskip
\begin{corollary}
Let $\alpha$ be a path in $\mathbf{R}^2$. If $v_{\theta_1}(\alpha)$ and $v_{\theta_2}(\alpha)$ are bounded above for distinct $\theta_1, \theta_2 \in [0, \pi)$, then $l(\alpha)$ is bounded above.
\end{corollary}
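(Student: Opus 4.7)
The plan is to apply the preceding lemma with a suitable choice of $\gamma$. Without loss of generality assume $\theta_1 < \theta_2$, and set $\gamma = \theta_2 - \theta_1$. Since $\theta_1, \theta_2 \in [0, \pi)$ are distinct, $\gamma \in (0, \pi)$, so the lemma produces a constant $r \in \mathbf{R}$ such that for every partition $P$ of $[0, 1]$
\[
 l_P(\alpha) \leqslant r\bigl(v_{\theta_1, P}(\alpha) + v_{\theta_2, P}(\alpha)\bigr).
\]
Here I am using the partition-wise inequality that the proof of the lemma actually establishes, rather than its statement in terms of the (possibly non-existent) real numbers $v_{\theta}(\alpha)$.

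By hypothesis, $v_{\theta_1}(\alpha)$ and $v_{\theta_2}(\alpha)$ are bounded above, meaning (as in the convention introduced for $l(\alpha) < M$ earlier in the paper) that there exist $M_1, M_2 \in \mathbf{R}$ such that $v_{\theta_i, P}(\alpha) \leqslant M_i$ for every partition $P$ and $i \in \{1, 2\}$. Substituting, we get $l_P(\alpha) \leqslant r(M_1 + M_2)$ uniformly in $P$, so $l(\alpha) \leqslant r(M_1 + M_2)$, as required. There is no real obstacle here—the corollary is essentially a one-line deduction—the only thing worth being careful about is making explicit the partition-wise reading of "bounded above", since neither $v_{\theta_i}(\alpha)$ nor $l(\alpha)$ has yet been shown to be a real number at this stage of the argument.
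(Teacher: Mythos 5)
Your proof is correct and is exactly the argument the paper intends (the corollary is stated without proof as an immediate consequence of the preceding lemma, applied with $\gamma = \theta_2 - \theta_1$). Your care in invoking the partition-wise inequality $l_P(\alpha) \leqslant r(v_{\theta_1, P}(\alpha) + v_{\theta_2, P}(\alpha))$ rather than the lemma's statement in terms of possibly non-existent suprema is a worthwhile clarification, fully in keeping with the paper's convention for inequalities involving unproven suprema.
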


The next lemma provides us with Step 2.

\bigskip
\begin{lemma}\label{L:cty}
Let $\alpha$ be a path in $\mathbf{R}^2$ such that $v_\theta(\alpha)$ exists for all $\theta$. Then $\theta\mapsto v_\theta(\alpha)$ is uniformly continuous and for all $\varepsilon > 0$ there exists a partition $P$ of $[0, 1]$ such that $v_\theta(\alpha) - v_{\theta, P}(\alpha) < \varepsilon$ for all $\theta$.
\end{lemma}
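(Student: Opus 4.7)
The plan is to derive both claims from Proposition \ref{P:UnCts} together with the preceding corollary. First I would note that, since by hypothesis $v_0(\alpha)$ and $v_{\pi/2}(\alpha)$ exist (and are therefore bounded above), the corollary supplies a constant $M$ such that $l_P(\alpha) \leqslant l(\alpha) \leqslant M$ for every partition $P$. Substituting this bound into Proposition \ref{P:UnCts} yields
$$ |v_{\theta_1, P}(\alpha) - v_{\theta_2, P}(\alpha)| \leqslant 2M|\theta_1 - \theta_2| $$
for every partition $P$. Taking the supremum over $P$ on both sides---using the elementary inequality $|\sup_P f_P - \sup_P g_P| \leqslant \sup_P |f_P - g_P|$---gives $|v_{\theta_1}(\alpha) - v_{\theta_2}(\alpha)| \leqslant 2M |\theta_1 - \theta_2|$, which is the desired Lipschitz (hence uniform) continuity.

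For the second assertion I would proceed by a finite-net argument. Given $\varepsilon > 0$, I would set $\delta = \varepsilon/(6M)$ and, using that $v_\theta$ is $\pi$-periodic, pick a finite $\delta$-net $\theta_1, \ldots, \theta_k$ in $[0, \pi]$. For each $i$, the existence of $v_{\theta_i}(\alpha)$ lets me choose a partition $P_i$ with $v_{\theta_i}(\alpha) - v_{\theta_i, P_i}(\alpha) < \varepsilon/3$. Let $P$ be a common refinement of $P_1, \ldots, P_k$. Since $v_{\theta, \cdot}$ is monotone under partition refinement (by the triangle inequality applied to $\pi_\theta\circ\alpha$), this gives $v_{\theta_i, P}(\alpha) \geqslant v_{\theta_i, P_i}(\alpha)$ for each $i$.

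Now for an arbitrary $\theta$, pick $\theta_i$ with $|\theta - \theta_i| < \delta$ and apply the three-term estimate
$$ v_\theta(\alpha) - v_{\theta, P}(\alpha) \leqslant |v_\theta(\alpha) - v_{\theta_i}(\alpha)| + \bigl(v_{\theta_i}(\alpha) - v_{\theta_i, P}(\alpha)\bigr) + |v_{\theta_i, P}(\alpha) - v_{\theta, P}(\alpha)|. $$
The first summand is bounded by $2M\delta = \varepsilon/3$ by the Lipschitz bound just established, the second by $\varepsilon/3$ by the choice of $P_i$ together with monotonicity, and the third by $2l_P(\alpha)\delta \leqslant 2M\delta = \varepsilon/3$ by Proposition \ref{P:UnCts}; the sum is less than $\varepsilon$, as required.

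The main obstacle is the constructive construction of the common refinement $P$: the list $P_1 \cup \cdots \cup P_k$ read off in non-decreasing order need not be a partition, since ordering a union of finite sets of reals can fail constructively. As in the proof of the previous theorem, I would resolve this by perturbing the points of $P_1, \ldots, P_k$ slightly (using uniform continuity of $\alpha$) so that they are pairwise distinct, at the cost of an arbitrarily small additional loss absorbed into the $\varepsilon/3$ budgets above. The remaining bookkeeping is then routine.
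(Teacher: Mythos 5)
Your proposal is correct and follows essentially the same route as the paper: the Lipschitz bound $2M|\theta_1-\theta_2|$ from Proposition \ref{P:UnCts} together with the corollary's bound $M$ on $l(\alpha)$, a $\delta$-net of $[0,\pi]$ with $\delta = \varepsilon/(6M)$, partitions $P_i$ accurate to $\varepsilon/3$ combined into a common refinement (with the same perturbation trick to make the union a partition constructively), and the same three-term estimate. The only cosmetic difference is that you obtain the Lipschitz continuity of $\theta \mapsto v_\theta(\alpha)$ directly by passing to suprema over partitions, whereas the paper argues by contradiction from Proposition \ref{P:UnCts}; the content is identical.
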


\begin{proof}
By the previous corollary there exists an upper bound $M$ on the length of $\alpha$. Then $ |v_{\theta_1}(\alpha) - v_{\theta_2}(\alpha)| \leqslant 2 M |\theta_1 - \theta_2|$ for all $\theta_1, \theta_2$---for if not, then there exists a partition $P$ such that $|v_{\theta_1, P}(\alpha) - v_{\theta_2, P}(\alpha)| \leqslant 2 l_P(\alpha) |\theta_1 - \theta_2|$,  contradicting Proposition \ref{P:UnCts}.

Let $\delta = \varepsilon / 6M$ and let $\theta_0, \ldots, \theta_n$ be a $\delta$-approximation to $[0, \pi)$ with $\theta_0 = 0$. For each $0 \leqslant i \leqslant n$, let $P_i$ be a partition of $[0, 1]$ such that $v_{\theta_i}(\alpha) - v_{\theta_i, P_i}(\alpha) < \varepsilon / 3$; since \( \alpha \) is continuous we can assume that $P = \bigcup P_i$ is a partition. Given any $\theta$ there exists $i$ such that $|\theta - \theta_i| \ \ \mathrm{mod}  \ \ \pi < \delta$, so
 \begin{eqnarray*}
  v_\theta(\alpha) - v_{\theta, P}(\alpha) & \leqslant & |v_{\theta, P}(\alpha) - v_{\theta_i, P}(\alpha)| + |v_{\theta_i, P}(\alpha) - v_{\theta_i}(\alpha)| + |v_{\theta_i}(\alpha) - v_\theta(\alpha)| \\
  & < & 2\delta M + \varepsilon / 3 + 2 \delta M \ \ = \ \ \varepsilon.
 \end{eqnarray*}
\end{proof}

Now it just remains to establish the third step and put everything together.

\bigskip
\begin{theorem}
 If $\alpha$ is a path in $\mathbf{R}^2$ such that $v_\theta(\alpha)$ exists for each $\theta$, then $l(\alpha)$ exists.
\end{theorem}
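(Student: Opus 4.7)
Steps 1 and 2 of the outline are already carried out by Lemma~\ref{L:cty}, which gives for any $\varepsilon > 0$ a partition $P$ with $v_\theta(\alpha) - v_{\theta, P}(\alpha) < \varepsilon$ uniformly in $\theta$. What remains is to convert such a uniform bound into a bound on $l_Q(\alpha) - l_P(\alpha)$ valid for every partition $Q$, and then to invoke the constructive least-upper-bound principle in the manner already used at the end of Section~\ref{S:rect_fv}.

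The main tool I would use is a Cauchy-type integral identity: for any partition $Q$,
 \[
  l_Q(\alpha) \;=\; \tfrac{1}{2} \int_0^\pi v_{\theta, Q}(\alpha)\, d\theta,
 \]
which falls out immediately from the formula for $v_{\theta, Q}(\alpha)$ given in Section~\ref{fv_rect} together with the elementary fact $\int_0^\pi |\cos(\theta - \theta_0)|\, d\theta = 2$. The integral itself poses no constructive difficulty, since for fixed $Q$ the map $\theta \mapsto v_{\theta, Q}(\alpha)$ is Lipschitz by Proposition~\ref{P:UnCts}.

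Given $\varepsilon > 0$, I would apply Lemma~\ref{L:cty} with tolerance $2\varepsilon / \pi$ to obtain a partition $P$ satisfying $v_\theta(\alpha) - v_{\theta, P}(\alpha) < 2\varepsilon / \pi$ for every $\theta$. If $P^\prime \supset P$ is any refinement, then $v_{\theta, P^\prime}(\alpha) \leqslant v_\theta(\alpha)$, and subtracting the Cauchy identities for $P$ and $P^\prime$ yields $l_{P^\prime}(\alpha) - l_P(\alpha) < \varepsilon$. For an arbitrary partition $Q$, I would use the perturbation trick from Section~\ref{S:rect_fv}---slightly adjusting the points of $P$, using the continuity of $\alpha$ to preserve the variation bound---so that $P \cup Q$ is a genuine partition refining (a perturbation of) $P$. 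Since refinement only increases $l$, this gives $l_Q(\alpha) \leqslant l_P(\alpha) + \varepsilon$ for every partition $Q$, so the set $\{l_Q(\alpha)\}$ is bounded above.

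The constructive LUB argument from the previous section then delivers $l(\alpha)$: given $a < b$, set $\varepsilon = (b-a)/3$, produce $P$ as above, and use trichotomy on $l_P(\alpha)$ against $a$ and $a + \varepsilon$. The step I expect to be most delicate is not the integral identity itself but rather checking that the perturbation of $P$ preserves the bound $v_\theta(\alpha) - v_{\theta, P}(\alpha) < 2\varepsilon / \pi$ uniformly in $\theta$; however, the Lipschitz estimate of Proposition~\ref{P:UnCts} together with the uniform continuity of $\alpha$ on $[0,1]$ should make this routine.
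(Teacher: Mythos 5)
Your proposal is correct and follows essentially the same route as the paper: the identity $\int_0^\pi v_{\theta,Q}(\alpha)\,d\theta = 2\,l_Q(\alpha)$ (the paper integrates $\sum_i l^\alpha_{Q,i}|\cos(\theta-\theta^\alpha_{Q,i})|$ term by term) combined with the uniform partition from Lemma~\ref{L:cty} to force $l_{P'}(\alpha) - l_P(\alpha) < \varepsilon$ for every refinement $P'$. If anything you are slightly more explicit than the paper's Section~\ref{fv_rect} proof about the two finishing details---reducing arbitrary partitions $Q$ to refinements via the perturbation trick, and invoking the constructive least-upper-bound principle---both of which are needed and unproblematic.
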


\begin{proof}
Given $\varepsilon > 0$, let $P: 0 = x_0 \leqslant x_1 \leqslant \cdots \leqslant x_n = 1$ be a partition of $[0, 1]$ such that $v_\theta(\alpha) - v_{\theta, P}(\alpha) < \varepsilon / \pi$ for all $\theta$, and consider any extension $P^\prime$ of $P$. Let $\alpha_P$ be the piecewise linear path with $\alpha_P(x_i) = \alpha(x_i)$ for each $i$ and 
 $$
  \alpha_P(t) = \frac{t - x_i}{x_{i + 1} - x_i}\alpha(x_i) + \left(1 - \frac{t - x_i}{x_{i + 1} - x_i}\right)\alpha(x_{i + 1})
 $$ 
for $t\in (x_i, x_{i + 1})$, and let $\alpha_{P^\prime}$ be defined similarly. Then $\theta\mapsto v_{\theta}(\alpha_P)$ and $\theta\mapsto v_{\theta}(\alpha_{P^\prime})$ are uniformly continuous and 
 $$
  \int_0^\pi v_{\theta}(\alpha_P)\ d\theta > \int_0^\pi v_{\theta}(\alpha_P^\prime) - \varepsilon / \pi\ d\theta.
 $$
Now, since 
 \begin{eqnarray*}
  \int_0^\pi v_{\theta}(\alpha_P)\ d\theta & = &  \int_0^\pi v_{\theta, P}(\alpha)\ d\theta \\
    & = & \int_0^\pi \sum_{i = 0}^{n - 1} l^\alpha_{P, i} |cos(\theta - \theta^\alpha_{P, i})|\ d\theta \\
    & = & \sum_{i = 0}^{n - 1} l^\alpha_{P, i} \int_0^\pi |cos(\theta)|\ d\theta,
 \end{eqnarray*}
and similarly for $P^\prime$, we have that
 $$
  l_P(\alpha) = \sum_{i = 0}^{n - 1} l^\alpha_{P, i} > \sum_{i = 0}^{n - 1} l^\alpha_{P^\prime, i} - \varepsilon = l_{P^\prime}(\alpha) - \varepsilon.
 $$
Thus $l(\alpha) - l_P(\alpha) < \varepsilon$, as $P^\prime$ is an arbitrary extension of $P$. Since $\varepsilon > 0$ is arbitrary, $l(\alpha)$ exists.
\end{proof}

\section{Conclusion}

The function \( f \) described in the introduction (which has finite variation if and only if a given binary sequence \( (a_n)_{n\geqslant 1} \) has a 1) viewed as a path, has variation in two spread-out directions (\( \theta = 0 \) and \( \theta = \pi / 2 \)) and still cannot be shown to be rectifiable. Indeed, for each \( n \) there is a path that has variation in \( n \) spread-out directions that is not rectifiable. In contrast it follows from Theorem \ref{main} and Lemma \ref{L:cty} that if \( v_\theta(\alpha) \)  exists for each \( \theta \) in a dense subset of \( [0, \pi) \), then \( \alpha \) is rectifiable. 

A path in \( \RR^n \) has \emph{finite variation in every direction} if $v_w$ exists for all $w \in \RR^n$ with \( ||w|| = 1 \). The proof of Theroem \ref{main} can be routinely extended to establish

\bigskip
\begin{theorem}
 A path $\alpha:[0, 1] \rightarrow \RR^n$ is rectifiable if and only if it has finite variation in every direction.
\end{theorem}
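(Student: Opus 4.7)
The plan is to follow the two-direction structure of Theorem \ref{main}, parametrizing directions by the unit sphere $S^{n-1} \subset \RR^n$ in place of the interval $[0, \pi)$. Because $v_w(\alpha) = v_{-w}(\alpha)$, one could restrict to a hemisphere, but for the measure-theoretic bookkeeping below it is cleaner to integrate against the rotation-invariant surface measure $\sigma$ on the whole of $S^{n-1}$.

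For the direction rectifiable implies finite variation in every direction, I would generalize Lemma \ref{L14} using the orthogonal decomposition $\RR^n = \RR w \oplus w^\perp$. Given a unit vector $w$, split each segment $\Delta_i = \alpha(s_{i+1}) - \alpha(s_i)$ into a $w$-component and a $w^\perp$-component, then form $\beta$ by flipping the sign of each $w$-component to make it non-negative (using discreteness of $\{\pi_w \circ \alpha(s_i)\}$ exactly as in condition (ii)) while leaving the $w^\perp$-components untouched; in higher dimensions only the $w$-components need to be flipped. Individual segment lengths are preserved, so $l_{P'}(\beta) = l_{P'}(\alpha)$; the path $\pi_w \circ \beta$ is monotone, so $\|\pi_w(\beta(1))\| = v_{w, P'}(\beta) = v_{w, P'}(\alpha)$; and the telescoping identity $\pi_{w^\perp}(\beta(1)) = \pi_{w^\perp}(\alpha(1) - \alpha(0))$ shows that $d := \|\pi_{w^\perp}(\beta(1))\| \leqslant l(\alpha)$. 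The Pythagorean inequality then yields exactly the same chain of estimates as in the planar case, and the subsequent reduction to the general case together with the least-upper-bound argument of Theorem 16 transfer without change.

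For the reverse direction I would carry Steps 1--3 through in order. Proposition \ref{P:UnCts} generalizes to $|v_{w_1, P}(\alpha) - v_{w_2, P}(\alpha)| \leqslant 2 l_P(\alpha) \|w_1 - w_2\|$ via $\bigl||\langle w_1, u\rangle| - |\langle w_2, u\rangle|\bigr| \leqslant \|w_1 - w_2\|$ for unit $u$; boundedness of $l(\alpha)$ follows from finite variation along any two non-parallel directions by applying the planar corollary inside the plane they span. Since $S^{n-1}$ is totally bounded, Step 2 proceeds exactly as in Lemma \ref{L:cty}: take a finite $\delta$-net $\{w_1, \ldots, w_N\} \subset S^{n-1}$, pick partitions $P_i$ with $v_{w_i}(\alpha) - v_{w_i, P_i}(\alpha)$ small, and assemble $P = \bigcup_i P_i$.

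The only genuinely new ingredient, and therefore the expected obstacle, is Step 3: the planar identity $\int_0^\pi |\cos \theta|\, d\theta = 2$ must be replaced by the Cauchy--Crofton-type formula
\[
 \int_{S^{n-1}} |\langle w, u\rangle| \, d\sigma(w) = c_n
\]
for a positive constant $c_n$ that depends only on $n$ and not on the unit vector $u$, the independence being immediate from the rotation-invariance of $\sigma$. Substituting this into the computation at the end of Section \ref{fv_rect}, a partition $P$ with $v_w(\alpha) - v_{w, P}(\alpha) < \varepsilon / \sigma(S^{n-1})$ for every $w$ converts
\[
 \int_{S^{n-1}} v_w(\alpha_P)\, d\sigma > \int_{S^{n-1}} v_w(\alpha_{P'})\, d\sigma - \varepsilon
\]
into $c_n l_P(\alpha) > c_n l_{P'}(\alpha) - \varepsilon$, giving $l(\alpha) - l_P(\alpha) \leqslant \varepsilon / c_n$ for arbitrary $\varepsilon > 0$, and hence that $l(\alpha)$ exists. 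Constructively, one can either verify the spherical integral via a sufficiently fine finite net on $S^{n-1}$ or bypass the integral altogether by averaging $v_w(\alpha_P)$ directly over such a net, which presents no essential difficulty beyond careful bookkeeping.
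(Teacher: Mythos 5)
Your overall plan is exactly the ``routine extension'' the paper has in mind: the orthogonal decomposition $\RR^n = \RR w \oplus w^\perp$ with sign-flips only in the $w$-component is a clean (indeed slightly cleaner) generalization of Lemma \ref{L14}, the Lipschitz estimate $\bigl||\langle w_1,u\rangle|-|\langle w_2,u\rangle|\bigr|\leqslant \|w_1-w_2\|$ correctly replaces Proposition \ref{P:UnCts}, total boundedness of $S^{n-1}$ gives Step 2, and you have correctly isolated the one genuinely new ingredient, namely replacing $\int_0^\pi|\cos\theta|\,d\theta$ by the rotation-invariant constant $c_n = \int_{S^{n-1}}|\langle w,u\rangle|\,d\sigma(w)$.

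There is, however, one step that fails as stated: the claim that boundedness of $l(\alpha)$ ``follows from finite variation along any two non-parallel directions by applying the planar corollary inside the plane they span.'' For $n\geqslant 3$ this is false: a long path running in a direction orthogonal to both $w_1$ and $w_2$ has $v_{w_1}(\alpha)=v_{w_2}(\alpha)=0$ but arbitrarily large length, and applying the planar corollary to the projection of $\alpha$ onto the plane spanned by $w_1,w_2$ only bounds the length of that projection, not of $\alpha$. The repair is immediate from your hypothesis: take an orthonormal basis $e_1,\ldots,e_n$ and use $\|\Delta\|\leqslant\sum_{j=1}^n|\langle\Delta,e_j\rangle|$ on each segment of a partition to get $l_P(\alpha)\leqslant\sum_{j=1}^n v_{e_j,P}(\alpha)$, so that finite variation along $n$ linearly independent directions bounds $l(\alpha)$. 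With that substitution the rest of your argument, including the Crofton-type averaging in Step 3, goes through.
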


\end{document}